\newtheorem{theorem}{Theorem}%[section]
\newtheorem{thm}[theorem]{Theorem}
\newtheorem{cor}[theorem]{Corollary}
\theoremstyle{definition}
\newtheorem{defn}[theorem]{Definition}
\newtheorem{ques}[theorem]{Question}
\newtheorem{rem}[theorem]{Remark}
\theoremstyle{remark}
\newcommand{\mbb}{\mathbb}
\newcommand{\QQ}{\mbb{Q}}
\newcommand{\ZZ}{\mbb{Z}}
\newcommand{\PP}{\mbb{P}}
\newcommand{\mc}{\mathcal}
\newcommand{\mcX}{\mc{X}}
\newcommand{\fF}{\kappa}
\newcommand{\mfm}{\mathfrak{m}}
\newcommand{\OO}{\mc{O}}
\newcommand{\ol}{\overline}
\newcommand{\SP}{\text{Spec }}
\newsavebox{\sembox}
\newlength{\semwidth}
\newlength{\boxwidth}
\newsavebox{\semrbox}
\newlength{\semrwidth}
\newlength{\boxrwidth}
\title
{Separable rational connectedness and weak approximation in positive characteristic}
\author[Starr]{Jason Michael Starr}
\address{Department of Mathematics \\
  Stony Brook University \\ Stony Brook, NY 11794 USA}
\email{jstarr@math.sunysb.edu} 
\author[Tian]{Zhiyu Tian}
\address{
Beijing International Center for Mathematical Research \\
Peking University \\
No.5 Yiheyuan Road \\
Haidian District, Beijing \\ 
China}
\email{zhiyutian@bicmr.pku.edu.cn}
\date{\today}
\begin{document}

%%%%%%%%%%%%%%%%%%%%%%%%%%%%%%%%%%%%%%%%%%%%%%%%%%%%%%%%%%%%%%%%%%%%
%%
%% Abstract
%%
%%%%%%%%%%%%%%%%%%%%%%%%%%%%%%%%%%%%%%%%%%%%%%%%%%%%%%%%%%%%%%%%%%%%

\begin{abstract}
In this short note we give a characterization of smooth projective varieties of Picard number one that are separably uniruled but not separably rationally connected. We also give a sufficient condition involving the torsion order and the uniruling index for a smooth Fano variety of Picard number one to be separably rationally connected. As an application, we prove some weak approximation results for Fano complete intersections in positive charactersitic. For example, we show that weak approximation holds at place of strong potentially good reduction for a Fano complete intersection in $\PP^n$ of type $(d_1, \ldots, d_c)$ in characteristic $p$ such that $n>d_1+\ldots d_c, p>d_1, \ldots, d_c.$
\end{abstract}

%%%%%%%%%%%%%%%%%%%%%%%%%%%%%%%%%%%%%%%%%%%%%%%%%%%%%%%%%%%%%%%%%%%%%%
%%
%% Body
%%
%%%%%%%%%%%%%%%%%%%%%%%%%%%%%%%%%%%%%%%%%%%%%%%%%%%%%%%%%%%%%%%%%%%%%%

\maketitle

%% \tableofcontents

%%%%%%%%%%%%%%%%%%%%%%%%%%%%%%%%%%%%%%%%%%%%%%%%%%%%%%%%%%%%%%%
%%
%% Section: Introduction
%%
%%%%%%%%%%%%%%%%%%%%%%%%%%%%%%%%%%%%%%%%%%%%%%%%%%%%%%%%%%%%%%%
In this short note, we improve some results in \cite{hypersurface} and \cite{WASTZ2018} about separable rational connectedness and weak approximation for Fano complete intersections in positive characteristic.

First let us recall some basic definitions.

\begin{defn}
For every field $K$, for every integral, $n$-dimensional,
  projective $K$-scheme $X$,
  for every $r>0$, an $r$-\textbf{uniruling} of $X$ over $K$,
  $$
  (h,\pi):Y \to X \times_{\SP K} M,
  $$
  is a finite morphism of $K$-varieties
  such that $\pi$ is proper and smooth with geometric fibers $\PP^1$
  and such that the $K$-morphism from the
  $r$-fold fiber product is dominant,
  $$
  h^{(r)}:Y\times_M \dots \times_M Y \to X \times_{\SP K}\dots
  \times_{\SP K} X, \ \ \text{pr}_i\circ  
  h^{(r)} = h\circ \text{pr}_i.
  $$  
  
  Following \cite[Definition IV.1.7.3]{Kollar96}, the $r$-\textbf{uniruling
    index}, $u_r(K,X)$, is the greatest common divisor of
  $\text{deg}(h^{(r)})$ for all $r$-unirulings with $h^{(r)}$
  generically finite ($0$ if there are no $r$-unirulings).  
  
  We say that $X_{\ol{K}}$ is
  \emph{uniruled }(resp. \emph{rationally connected}) if $u_1(K,X)>0$ (resp.  $u_2(K,X)>0$).
  
  We say $X_{\ol{K}}$ is \emph{separably uniruled} if there is a uniruling family such that $pr_1 \circ h^{(1)}: Y \to X$ is dominant and separable.

Similarly, $X_{\ol{K}}$ is \emph{separably rationally connected} if there is a $2$-uniruling family such that
$$
  h^{(2)}:Y\times_M  Y \to X \times_{\SP K}X, \ \ \text{pr}_i\circ  
  h^{(2)} = h\circ \text{pr}_i.
  $$  
 is dominant and separable.
\end{defn}

 The following is a well-known open question:
\begin{ques}\label{2}
Is every smooth Fano hypersurface separably rationally connected?
\end{ques}
Some partial answers are given by Chen-Zhu\cite{ChenZhuCI}, the work of the second named author \cite{hypersurface}, and in a recent joint work of the two authors with R. Zong \cite{WASTZ2018}. One key component of the joint work of authors with R. Zong is a result about closedness of the separable rational connectedness under some conditions, which we will generalize in this article.
 
We first strengthen a result of the second named author \cite[Theorem 5]{hypersurface}.
\begin{thm}\label{thm:PicOne}
Let $X$ be a smooth projective variety, whose Picard group is isomorphic to $\ZZ$, defined over an algebraically closed field of positive characteristic. Assume that $X$ is separably uniruled. Then $X$ is separably rationally connected if and only if $H^0(X, \Omega^i_X)=0$ for $i=1, \ldots, \dim X$. 
\end{thm}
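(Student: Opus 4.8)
The plan is to prove the two implications separately, using throughout the standard characterization (valid over an algebraically closed field) that $X$ is separably rationally connected exactly when it carries a \emph{very free} rational curve $f:\PP^1\to X$, i.e.\ one with $f^*T_X$ ample, equivalently $f^*T_X\cong\bigoplus_j\OO(a_j)$ with every $a_j\ge 1$; similarly $X$ is separably uniruled exactly when it carries a \emph{free} curve, one with every $a_j\ge 0$.

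For the forward implication, suppose $X$ is separably rationally connected and let $\omega\in H^0(X,\Omega^i_X)$ with $1\le i\le\dim X$. I would fix a very free curve $f:\PP^1\to X$; then $f^*\Omega^1_X\cong\bigoplus_j\OO(-a_j)$ with all $a_j\ge 1$, so $f^*\Omega^i_X=\wedge^i f^*\Omega^1_X$ is a direct sum of line bundles of strictly negative degree and has no nonzero global sections. Hence $f^*\omega=0$, which means $\omega$ vanishes at every point of the image of $f$. Since very free curves pass through a general point and their images sweep out a dense subset of $X$, and the vanishing locus of $\omega$ is closed, we get $\omega=0$. This step is characteristic-free and routine.

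For the reverse implication I would argue by contraposition: assuming $X$ is separably uniruled with $\mathrm{Pic}(X)\cong\ZZ$ but \emph{not} separably rationally connected, I produce a nonzero form in some $H^0(X,\Omega^i_X)$, $1\le i\le\dim X$. The engine is a foliation coming from free curves. First I would show the free rational curves do not span $T_X$ at a general point: if they did, attaching a suitable comb of free curves at a general point and smoothing (the separable comb-smoothing of Kollár--Miyaoka--Mori, available because $X$ is separably uniruled) would yield a very free curve, a contradiction. Let $\mcF\subseteq T_X$ be the saturated subsheaf generated by the free curves through a general point, equivalently the relative tangent sheaf of the maximal separably-rationally-connected fibration $\pi:X\dashrightarrow Y$, saturated over all of $X$; then $\mcF$ is a proper saturated subsheaf of some rank $r$ with $n-r\ge 1$, where $n=\dim X$. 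A general free curve $C=f(\PP^1)$ lies in a fiber of $\pi$, so $T_X/\mcF$ restricts along $f$ to a trivial bundle; in particular $\deg\big(f^*\det(T_X/\mcF)\big)=0$. Now the Picard-number-one hypothesis is decisive: writing $\det(T_X/\mcF)=cH$ for the ample generator $H$ of $\mathrm{Pic}(X)\cong\ZZ$, the relation $c\,(H\cdot C)=0$ with $H\cdot C>0$ forces $c=0$, so $\det(T_X/\mcF)\cong\OO_X$. Dualizing the saturated inclusion gives $\mcF^{\perp}:=(T_X/\mcF)^{\vee}\hookrightarrow\Omega^1_X$ of rank $n-r\ge 1$ with $\det\mcF^{\perp}\cong\det(T_X/\mcF)^{\vee}\cong\OO_X$, and taking top exterior powers yields a nonzero map $\OO_X\cong\wedge^{n-r}\mcF^{\perp}\to\wedge^{n-r}\Omega^1_X=\Omega^{n-r}_X$, i.e.\ a nonzero element of $H^0(X,\Omega^{n-r}_X)$ with $1\le n-r\le\dim X$, as required.

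I expect the main obstacle to be the construction and control of the foliation $\mcF$ in positive characteristic: producing a genuine fibration (or a $p$-closed foliation) whose general fiber absorbs the free curves and for which $T_X/\mcF$ restricts trivially to those fibers. The two delicate points are (i) the separable comb-smoothing, where one must keep the smoothed curve very free despite possible inseparability phenomena, and (ii) upgrading the easy inequality $\deg f^*\det(T_X/\mcF)\le 0$ coming from nefness of $f^*T_X$ to the exact equality $\deg f^*\det(T_X/\mcF)=0$ on the free curves that actually generate $\mcF$. Both are precisely the sort of positive-characteristic closedness statement for separable rational connectedness invoked from \cite{WASTZ2018}, and this is where that input together with the separable-uniruledness hypothesis is essential; the remaining determinant-and-Picard-number bookkeeping is formal.
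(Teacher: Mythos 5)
Your overall skeleton is the same as the paper's: prove the ``only if'' direction by restricting forms to very free curves, and prove the ``if'' direction contrapositively by building a distribution out of free curves, using the Picard-number-one hypothesis to trivialize its determinant, and extracting a nonzero section of $\Omega^{n-r}_X$. The forward direction and the final determinant-and-saturation bookkeeping are fine. The problem is that the one step you flag as ``the main obstacle'' is precisely the mathematical content of the theorem, and the tools you propose to fill it do not exist or do not apply. You define $\mcF$ as the relative tangent sheaf of ``the maximal separably-rationally-connected fibration $\pi:X\dashrightarrow Y$''; no such fibration with the properties you use (general free curves lie in fibers, $T_X/\mcF$ restricts trivially to them) is available in positive characteristic --- producing it would require integrability and $p$-closedness statements that are exactly what is hard here. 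Moreover, you assert that the missing ingredients are ``the sort of positive-characteristic closedness statement for separable rational connectedness invoked from \cite{WASTZ2018}''; those results concern specialization of separable rational connectedness in families and have no bearing on constructing this foliation. What the paper actually uses is Shen's theory of maximally free curves \cite{ShenFRC}: the subspace $D(x)\subset \Omega_X|_x$ of trivial directions of a maximally free curve at a general point $x$ is independent of the chosen maximally free curve (Proposition 2.2 of loc.\ cit.), these subspaces glue to a locally free subsheaf over an open set (Proposition 2.5), and the resulting saturated subsheaf $\mathcal{D}\subset\Omega^1_X$ pulls back trivially along a maximally free curve. This replaces any fibration: one only needs the distribution as a subsheaf, never its integrability.

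A second, related slip: your ``easy inequality'' is backwards. Nefness of $f^*T_X$ for $f$ free makes locally free quotients have degree $\geq 0$, so it gives $\deg f^*\det(T_X/\mcF)\geq 0$; the substantive point is the opposite inequality, i.e.\ triviality of the restriction. That triviality genuinely requires the curve to be \emph{maximally} free, not merely a general free curve: if $f$ has positive rank $r'<r$, then $f^*(T_X/\mcF)$ is (up to torsion) a quotient of the trivial part $\OO^{\oplus(n-r')}$ of $f^*T_X$ of smaller rank $n-r$, and such a quotient can have positive degree (e.g.\ $\OO\oplus\OO$ surjects onto $\OO(1)$). The distinction between free and maximally free curves, which your sketch never draws, is exactly what Shen's Proposition 2.2 is about and is what makes the degree-zero claim, and hence the whole contrapositive, go through.
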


\begin{rem}
One can prove that such $X$ in Theorem \ref{thm:PicOne} is freely rationally connected (FRC) as defined in \cite[Definition 1.2]{ShenFRC}.
\end{rem}

\begin{proof}[Proof of Theorem \ref{thm:PicOne}]
The ``only if" part is well-known. In the following we prove the ``if" part.

Since $X$ is separably uniruled, there is a free curve, i.e. a morphism $f:\PP^1 \to X$ such that 
\begin{align*}
f^{*}\Omega_X &\cong \OO(-a_1)\oplus \ldots \oplus \OO(-a_r) \oplus \OO \oplus \ldots \oplus \OO,\\
f^{*} T_X &\cong \OO(a_1)\oplus \ldots \oplus \OO(a_r) \oplus \OO \oplus \ldots \oplus \OO, a_1 \geq a_2 \geq \ldots a_r>0.\\
\end{align*}

Define the positive rank $r$ of $X$ to be the maximum number of non-trivial summand in the above decomposition among all free curves. A free curve is called \emph{maximally free} if the pull-back of the cotangent bundle has $r$ negative summands.

Given a general point $x \in X$, by \cite[Proposition 2.2]{ShenFRC}, there is a well-defined subspace $D(x) \subset \Omega_X|_x$, as the subspace of the $\OO$-directions of a maximally free curve at $x$ (i.e. $D(x)$ is independent of the choice of the maximally free curve). Furthermore,  over an open subset $U$ of $X$, which contains a general maximally free rational curve, the subspaces of $D(x)$ of $\Omega_X|_x$ glue together to a (locally free) coherent subsheaf of $\Omega_X$ (loc. cit. Proposition 2.5). Denote by $\mathcal{D}$ the saturated subsheaf of $\Omega_X$ which extends the locally free subsheaf given by $D(x), x \in U$.

Let $\phi: \PP^1 \to X$ be a maximally free curve. If it is not very free, then we have
\[
\phi^* {\Omega_X} \cong \OO(-a_1)\oplus \ldots \oplus \OO(-a_r) \oplus \OO \oplus\ldots \oplus \OO, r<n, a_1 \geq a_2 \geq \ldots a_r >0,
\]
\[
\phi^* \mathcal{D} \cong \OO\oplus \ldots \oplus \OO,
\]
(c.f. the paragraph after Corollary 3.2, loc. cit.)

Thus $\mathcal{D}$ is locally free along a general maximally free curve. We have an inclusion
\[
0 \to (\Lambda^{n-r}\mathcal{D})^{**} \to \Omega^{n-r}_X.
\]
Since $X$ is smooth, the reflexive rank $1$ sheaf $(\Lambda^{n-r}\mathcal{Q})^{**}$ is locally free. Furthermore, $X$ has Picard number one and $\deg \phi^*(\Lambda^{n-r}\mathcal{Q})^{**}=0$, thus it is trivial. So we have a section of $\Omega^{n-r}_X$.
\end{proof}

\begin{rem}
Koll\'ar (Exercise 5.19, Chap. V, \cite{Kollar96}) gives examples of separably uniruled, not separably rationally connected singular Fano varieties of Picard number one. The singularities are ordinary double points. Koll\'ar proved that they are not separably rationally connected by writing down unexpected sections of $\Omega_X^{n-1}$, where $n$ is the dimension. We do not know smooth examples.
\end{rem}

Before we state some applications of Theorem \ref{thm:PicOne}, we recall some definitions.
For every positive integer $N$, an $N$-\textbf{decomposition of the
    diagonal} is a finite sequence $(e_i:D_i\to X,Z_i)_i$ of pairs
  of a proper morphism $e_i$ with $D_i$ integral of dimension $n-1$
  and an $n$-cycle $Z_i\in \text{CH}_n(X\times_{\SP K} D_i)$ such
  that the $n$-cycle
  $$
  N\cdot[\Delta_X] - \sum_i (\text{Id}\times e_i)_*Z_i\in
  \text{CH}_n(X\times_{\SP K} X),
  $$
  is in the image of the flat pullback map,
  $$
  \text{pr}_1^*:\text{CH}_0(X) \to \text{CH}_n(X\times_{\SP K} X).
  $$
  Following \cite{ChatzLevine}, the \textbf{torsion order}
  $\text{Tor}(K,X)$ is the greatest common divisor of all integers
  $N$ such that there exists an $N$-decomposition of the diagonal.
  The torsion order equals $0$ if there is no such decomposition.
  
 \begin{cor}\label{cor:SRCnumeric}
Let $X$ be a smooth projective Fano variety, whose Picard group is isomorphic to $\ZZ$, defined over an algebraically closed field $\kappa=\bar{\kappa}$ of characteristic $p>0$. Assume that $p$ is prime to the torsion order $\text{Tor}(\bar{\kappa}, X)$ and the uniruling index $u_1(\bar{\kappa}, X)$. Then $X$ is separably rationally connected.
\end{cor}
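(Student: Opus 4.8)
The plan is to verify the two hypotheses of Theorem \ref{thm:PicOne} for $X$ --- that it is separably uniruled and that $H^0(X,\Omega^i_X)=0$ for $i=1,\ldots,\dim X$ --- and then invoke that theorem. Since $X$ is Fano it is uniruled, so $u_1(\ol\kappa,X)>0$ and the hypothesis $p\nmid u_1(\ol\kappa,X)$ is meaningful. First I would extract from it a single uniruling of degree prime to $p$: by definition $u_1(\ol\kappa,X)$ is the greatest common divisor of the degrees $\deg(h^{(1)})$ over all unirulings with $h^{(1)}$ generically finite, so if every such degree were divisible by $p$ then $p$ would divide the gcd; hence some uniruling $(h,\pi)\colon Y\to X\times_{\SP\ol\kappa}M$ has $\deg(h^{(1)})=[\kappa(Y):\kappa(X)]$ prime to $p$. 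A finite extension of degree prime to the characteristic has trivial inseparable degree, so $\kappa(Y)/\kappa(X)$ is separable and $h^{(1)}=h\colon Y\to X$ is dominant and separable; by definition $X$ is separably uniruled.

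Next I would use $p\nmid\text{Tor}(\ol\kappa,X)$ to force $H^0(X,\Omega^i_X)=0$ for $i\ge 1$. Writing $N=\text{Tor}(\ol\kappa,X)$ and $n=\dim X$, fix an $N$-decomposition of the diagonal, so that
\[
N\,[\Delta_X]=\sum_i(\text{Id}\times e_i)_*Z_i+\text{pr}_1^*\beta \in \text{CH}_n(X\times_{\SP\ol\kappa}X)
\]
for some $\beta\in\text{CH}_0(X)$. The idea is to let $\text{CH}_n(X\times X)$ act on Hodge cohomology $H^q(X,\Omega^p_X)$, preserving the bidegree $(p,q)$, with $[\Delta_X]$ acting as the identity; in positive characteristic this action is furnished by the cycle-class formalism of Chatzistamatiou--Levine. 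On $H^0(X,\Omega^p_X)$ with $p\ge 1$, each summand on the right acts as zero: the term $\text{pr}_1^*\beta$ is supported on $\{\text{points}\}\times X$, so its action factors through the pullback of $\alpha$ to a finite set of points, which vanishes for $p\ge 1$; and each $(\text{Id}\times e_i)_*Z_i$ is supported on $X\times W$ with $\dim W\le n-1$, so its action factors through a Gysin pushforward $H^{q-c}(W,\Omega^{p-c})\to H^q(X,\Omega^p_X)$ of codimension $c\ge 1$, which vanishes for $q=0$. Hence $N$ annihilates $H^0(X,\Omega^p_X)$; since $p\nmid N$ and these are finite-dimensional $\ol\kappa$-vector spaces, multiplication by $N$ is invertible, forcing $H^0(X,\Omega^p_X)=0$ for all $p\ge 1$.

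With both hypotheses in hand, Theorem \ref{thm:PicOne} yields that $X$ is separably rationally connected. The separability extraction and the final appeal to the theorem are routine; I expect the main obstacle to be the second step --- setting up the action of the decomposition of the diagonal on Hodge cohomology in characteristic $p$ and justifying the two vanishing computations rigorously. In particular one must handle the possibly singular $W=\bigcup_i e_i(D_i)$ (passing to a resolution) and keep careful track of the bidegrees in the Gysin maps, which is precisely the point where I would rely on the intersection-theoretic machinery of Chatzistamatiou--Levine rather than a hands-on argument.
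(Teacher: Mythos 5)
Your proposal is correct and takes essentially the same route as the paper: the paper verifies separable uniruledness by citing \cite[Corollary 3.3]{WASTZ2018} (whose content is exactly your prime-to-$p$ degree extraction) and the vanishing of $H^0(X,\Omega^i_X)$ by citing Totaro's decomposition-of-the-diagonal argument on Hodge cohomology via the Chatzistamatiou--Levine formalism, then applies Theorem \ref{thm:PicOne}. Your write-up simply inlines the arguments behind those two citations, including the correct treatment of the two supported terms and the caveat about resolving the singular supports.
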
 

\begin{proof}
First of all, $X$ is separably uniruled since $p$ is prime to $u_1(\bar{\kappa}, X)$ (\cite[Corollary 3.3]{WASTZ2018}). 
Since $\text{Tor}(\bar{\kappa}, X)$ is prime to $p$, we have $H^0(X, \Omega_X^i)=0$, by the argument of Totaro \cite[Proof of Lemma 2.2]{TotaroRationality}.
Thus this follows from Theorem \ref{thm:PicOne}.
\end{proof}

\begin{cor}\label{cor:specializationPicOne}
Let $(R, \mfm, \kappa, K)$ be a DVR and  $\mcX \to \SP R$ be a smooth projective family. Assume that the Picard group of the geometric generic fiber $\mcX_{\bar{K}}$ is isomorphic to $\ZZ$, and that the characteristic of $\kappa$ is prime to the torsion order $\text{Tor}(\bar{K}, \mcX_{\bar{K}})$and the uniruling index $u_1(\bar{K}, \mcX_{\bar{K}})$of the geometric generic fiber $\mcX_{\bar{K}}$, then the central fiber is also separably rationally connected.
\end{cor}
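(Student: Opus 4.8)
My plan is to verify for the geometric special fibre $\mcX_{\bar\kappa}$ the four hypotheses of Corollary~\ref{cor:SRCnumeric} and then to apply that corollary. Since separable rational connectedness is a geometric property, I would first replace $R$ by its strict henselisation, so that $\mcX_{\bar\kappa}$ and $\mcX_{\bar K}$ are literally the geometric special and geometric generic fibres of a smooth projective family over a strictly henselian DVR, all hypotheses being unaffected by this change. The fibre $\mcX_{\bar\kappa}$ is automatically smooth and projective, so what must be shown is: (i) $p\nmid \text{Tor}(\bar\kappa,\mcX_{\bar\kappa})$; (ii) $p\nmid u_1(\bar\kappa,\mcX_{\bar\kappa})$; (iii) $\text{Pic}(\mcX_{\bar\kappa})\cong\ZZ$; and (iv) $\mcX_{\bar\kappa}$ is Fano.

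For (i) and (ii) the point is that both invariants can only decrease under specialisation. A decomposition of the diagonal of $\mcX_{\bar K}$ specialises, through the specialisation homomorphism on Chow groups, to a decomposition of the diagonal of $\mcX_{\bar\kappa}$ of the same index, since this homomorphism carries $[\Delta]$ to $[\Delta]$ and respects both the divisorial and the $\text{pr}_1^{*}\text{CH}_0$ terms; hence $\text{Tor}(\bar\kappa,\mcX_{\bar\kappa})\mid\text{Tor}(\bar K,\mcX_{\bar K})$ and (i) follows. Likewise a $1$-uniruling of the generic fibre specialises, by properness of the space of rational curves, to a $1$-uniruling of the special fibre of the same degree, so $u_1(\bar\kappa,\mcX_{\bar\kappa})\mid u_1(\bar K,\mcX_{\bar K})$, giving (ii); in particular $u_1(\bar\kappa,\mcX_{\bar\kappa})>0$, so $\mcX_{\bar\kappa}$ is uniruled, and separably uniruled by \cite[Corollary~3.3]{WASTZ2018}.

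The heart of the matter is (iii), and I expect this control of the Picard group under specialisation to be the main obstacle. The input is that, because $p\nmid\text{Tor}$ on each fibre, the decomposition-of-the-diagonal argument of Bloch--Srinivas (used in the form of Totaro's argument already invoked in the proof of Corollary~\ref{cor:SRCnumeric}) gives $H^{i}(\mcX_{\bar\kappa},\OO)=H^{i}(\mcX_{\bar K},\OO)=0$ for all $i>0$, since the relevant correspondence acts as multiplication by an integer prime to $p$, hence invertible, and factors through lower-dimensional and point correspondences. Now the relative Picard scheme $\text{Pic}_{\mcX/R}$ is representable by a separated $R$-scheme (the family being smooth projective with geometrically integral fibres), and the vanishing of $H^{2}(\mcX_{s},\OO)$ makes it smooth over $R$, while the vanishing of $H^{1}(\mcX_{s},\OO)$ makes its fibres zero-dimensional; a smooth morphism with zero-dimensional fibres is \'etale, so $\text{Pic}_{\mcX/R}\to\SP R$ is \'etale. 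Over the strictly henselian base $R$ an \'etale group scheme has geometrically constant fibres, and therefore $\text{Pic}(\mcX_{\bar\kappa})\cong\text{Pic}(\mcX_{\bar K})\cong\ZZ$; in particular $\mcX_{\bar\kappa}$ has Picard number one and torsion-free Picard group. I regard the verification that both $H^{1}=0$ and $H^{2}=0$ persist on the special fibre (which is exactly where the coprimality of $p$ to the torsion order is used) as the crucial technical point.

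Finally (iv) is automatic from (ii) and (iii): a projective variety of Picard number one that is uniruled is Fano, because a member $C$ of a covering family of rational curves satisfies $-K\cdot C>0$, and as $\text{Pic}$ is generated by an ample class this forces $-K$ to be a positive multiple of it, hence ample. With (i)--(iv) established, Corollary~\ref{cor:SRCnumeric} applies to $\mcX_{\bar\kappa}$ and shows that the central fibre is separably rationally connected.
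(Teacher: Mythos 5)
Your reduction to Corollary \ref{cor:SRCnumeric} and your handling of points (i), (ii), (iv) are essentially the paper's (for (ii) you should simply cite \cite[Corollary IV.1.7.4]{Kollar96}, as the paper does, rather than argue by ``properness of the space of rational curves'': limits of rational curves break into trees, so the naive specialization of a uniruling is not a uniruling of the same degree). The genuine divergence, and the gap, is in (iii). You deduce $\text{Pic}(\mcX_{\bar{\kappa}})\cong\ZZ$ from the vanishing $H^1(\mcX_{\bar{\kappa}},\OO)=H^2(\mcX_{\bar{\kappa}},\OO)=0$, and you justify that vanishing by ``Totaro's argument''/Bloch--Srinivas. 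That is not what that argument gives. Totaro's Lemma 2.2 (the result invoked in the proof of Corollary \ref{cor:SRCnumeric}) pairs zero-cycles against global differential forms and shows only that the torsion order kills $H^0(X,\Omega^i_X)$ for $i>0$; in characteristic $p$ there is no Hodge symmetry, so nothing follows for $H^i(X,\OO_X)$. Moreover, the mechanism you describe --- a correspondence supported on $X\times D$ ``factors through lower-dimensional correspondences'' and hence kills $H^i(X,\OO_X)$ --- is exactly the step that is delicate in characteristic $p$: the divisor $D$ is singular, resolution of singularities is unavailable, and one needs pushforwards/correspondence actions on coherent (Hodge) cohomology of singular varieties, i.e.\ the Chatzistamatiou--R\"ulling machinery underlying \cite{ChatzLevine}. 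The statement you need is true, but it is a theorem of that type and must be cited as such; with only Totaro's argument in hand, your proof has a hole at precisely the characteristic-$p$-sensitive point that this corollary is about.

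For contrast, the paper's own proof avoids coherent cohomology of the special fibre entirely: it spreads the decomposition of the diagonal out over $R$, lets the correspondence $\text{Tor}(\bar{K},\mcX_{\bar{K}})\cdot\Delta_{\mcX_{\bar{\kappa}}}$ act on codimension-one cycles to show that $\text{Tor}(\bar{K},\mcX_{\bar{K}})$ times any divisor class on $\mcX_{\bar{\kappa}}$ lies in the image of the specialization map on Picard groups, and then quotes \cite[Theorem 1.8]{GouJav}, which says the cokernel of that specialization map is a finite $p$-group; coprimality of $\text{Tor}$ with $p$ then gives surjectivity, hence $\text{Pic}(\mcX_{\bar{\kappa}})\cong\ZZ$. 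If you do import the $H^i(\OO)$-vanishing theorem with a correct reference, the rest of your route works after two repairs. First, an \'etale scheme over a strictly henselian DVR need \emph{not} have constant fibres (it may have components lying only over the generic point --- the inclusion of the generic point is itself \'etale); what is true, and suffices, is that every $\bar{\kappa}$-point of the separated \'etale scheme $\text{Pic}_{\mcX/R}$ lifts uniquely to an $R$-section, giving an injective homomorphism $\text{Pic}(\mcX_{\bar{\kappa}})\hookrightarrow\text{Pic}(\mcX_{\bar{K}})\cong\ZZ$, which is an isomorphism onto a nonzero subgroup because the special fibre carries an ample class. Second, in (iv), ``uniruled plus Picard number one implies Fano'' must use a \emph{free} curve rather than a mere covering family of rational curves: in characteristic $p$ there exist unirational surfaces of general type (Shioda), so $-K\cdot C>0$ is not automatic for covering families; since (ii) gives separable uniruledness and hence a free curve, the conclusion stands --- and making this verification explicit is a small improvement over the paper, whose proof leaves the Fano condition on the special fibre implicit.
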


\begin{proof}
The torsion order $\text{Tor}(\bar{\kappa}, \mcX_{\bar{\kappa}})$ (resp. the uniruling index $u_1(\bar{\kappa}, \mcX_{\bar{\kappa}})$) of the geometric closed fiber is a divisor of the torsion order $\text{Tor}(\bar{K}, \mcX_{\bar{K}})$ (resp. the uniruling index $u_1(\bar{K}, \mcX_{\bar{K}})$) of the geometric generic fiber. For the statement about torsion order, see, for example, \cite[Proposition 3.1, 3.2]{ChatzLevine}. For the statement about uniruling index, see \cite[Corollary IV. 1.7.4]{Kollar96}.
Thus by Corollary \ref{cor:SRCnumeric}, we only need to show that the Picard group of the geometric closed fiber is isomorphic to $\ZZ$.

We claim that for any (Weil) divisor $D$ in the geometric closed fiber $\mcX_{\kappa}$, $\text{Tor}(\bar{K}, \mcX_{\bar{K}}) \cdot D$ lies in the image of the specialization map $Sp: Pic(\mcX_{\bar{K}})\to Pic(\mcX_{\bar{\kappa}})$.
To see this, simply note that up to making extentions of $R$,  there are prime effective divisors $D_i \subset \mcX$, a section $s: \SP R \to \mcX$, such that $\text{Tor}(\bar{K}, \mcX_{\bar{K}}) [\Delta_{\mcX/R}] $ is rationally equivalent $\text{Tor}(\bar{K}, \mcX_{\bar{K}}) \mcX \times_R s(\SP R)+Z$, where $Z$ is a cycle supported in $\cup_i D_i \times_R \mcX$. Then the correspondence $\text{Tor}(\bar{K}, \mcX_{\bar{K}}) \Delta_{\mcX_{\bar{\kappa}}}$ acts on the Chow group of codimension $1$ cycles of $\mcX_{\bar{\kappa}}$ and $\text{Tor}(\bar{K}, \mcX_{\bar{K}})$ times any such cycle is a linear combination of ${D_i}|_{\mcX_{\bar{\kappa}}}$, which of course lies in the image of the specialization map.
On the other hand, we know the cokernal of $Sp$ is a finite $p$-group by \cite[Theorem 1.8]{GouJav}.
Thus the specialization is surjective and the Picard group of the geometric closed fiber is $\ZZ$.
\end{proof}

\begin{thm}\label{thm:WAPicOne}
For every smooth, affine, connected curve $B$ over a field
  $\fF=\ol{\fF}$, for every $B$-flat proper family
  $\mcX_B$.
  Assume that the geometric generic fiber $\mcX_{\fF(B)}$ is Fano and $Pic(X)\cong \ZZ$, and that $p$ is prime to the uniruling index and torsion order of the geometric generic fiber. Then weak approximation holds at every place
  of (strong) potentially good reduction.
\end{thm}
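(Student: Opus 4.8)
The plan is to reduce the statement to the separable rational connectedness of the generic and relevant special fibers, and then to invoke the weak approximation theorem for separably rationally connected fibrations from \cite{WASTZ2018}; the genuinely new input is that our numerical hypotheses now \emph{force} separable rational connectedness of those fibers, via Corollaries \ref{cor:SRCnumeric} and \ref{cor:specializationPicOne}, which is exactly the closedness property on which that machinery rests. Writing $K=\fF(B)$, I would first observe that the geometric generic fiber $\mcX_{\ol{K}}$ is smooth, Fano, has Picard group $\ZZ$, and has characteristic prime to both its torsion order and its uniruling index; hence Corollary \ref{cor:SRCnumeric} shows that $\mcX_{\ol{K}}$ is separably rationally connected.

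Next I would fix an arbitrary place $v$ of strong potentially good reduction. By definition, after a suitable finite base change $B'\to B$ the pulled-back family acquires good reduction above $v$, i.e. it extends to a smooth proper model over the discrete valuation ring $\mcO_{B',v'}$ for some $v'$ lying over $v$. The geometric generic fiber of this model is again $\mcX_{\ol{K}}$, so it still has Picard group $\ZZ$ and the same torsion order and uniruling index; applying Corollary \ref{cor:specializationPicOne} to $\mcO_{B',v'}$ then yields that the smooth special fiber is itself separably rationally connected. The one nontrivial point here, namely that the special fiber again has Picard number one, is precisely what Corollary \ref{cor:specializationPicOne} establishes through the specialization map on Picard groups and the finiteness of its cokernel.

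With both the generic fiber and the special fiber of the good model separably rationally connected, the hypotheses of the weak approximation theorem of \cite{WASTZ2018} are met at $v'$, and I would invoke it to obtain approximation there: given a jet of a section over the completed local ring and an order $N$, one lifts a very free curve through the prescribed point of the smooth special fiber, assembles a comb from it and the given section, and smooths the comb while keeping the $N$-jet fixed, the separability guaranteeing that the relevant comb-smoothing deformation space is smooth of the expected dimension in characteristic $p$.

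The main obstacle is the interface between these ingredients rather than any one of them in isolation: one must check that the closedness of separable rational connectedness furnished by Corollary \ref{cor:specializationPicOne} matches exactly the form required by the fibration-theoretic argument of \cite{WASTZ2018}, and that the approximation obtained over $B'$ descends to approximation over $B$. It is for this descent that the \emph{strong} form of potentially good reduction is needed: it controls the (tame) ramification of $B'\to B$ so that a jet prescribed at $v$ pulls back to a jet of matching order at $v'$ and the smoothed comb can be chosen compatibly with the base change, allowing the approximating section over $B'$ to be realized as the pullback of one over $B$. I expect this equivariant/descent bookkeeping, rather than the separable rational connectedness — which now follows formally from the corollaries — to be the delicate step.
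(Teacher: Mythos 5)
Your proposal matches the paper's proof: the paper simply states that the argument is the same as that of \cite[Corollary 1.18]{WASTZ2018}, with the new specialization results (Corollaries \ref{cor:SRCnumeric} and \ref{cor:specializationPicOne}) supplying separable rational connectedness of the geometric generic fiber and of the smooth special fiber after base change, in place of the old index-one hypothesis --- which is exactly your reduction. The comb-smoothing and descent bookkeeping that you defer to the machinery of \cite{WASTZ2018} is likewise deferred by the paper.
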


The proof is essentially the same as the proof of \cite[Corollary 1.18]{WASTZ2018}, except that now because of our stronger results about specializations of separable rational connectedness, we do not need the index one assumption anymore.

Note that for a complete intersection $X$ of type $(d_1, \ldots, d_c)$ such that $n>d_1+\ldots d_c$, there is a uniruling family of lines, the degree of the evaluation morphism of which is $\Pi_i (d_i !)$. Thus the uniruling index of $X$ is a divisor of $\Pi_i (d_i!)$. By \cite[Proposition 4.2]{ChatzLevine}, the torsion order is also a divisor of $\Pi_j (d_j !)$. So we obtain the following.

\begin{cor}\label{cor:SRCindex2}
Let $(R, \mfm, \kappa, K)$ be a DVR and  $\mcX \to \SP R$ be a smooth projective family. Assume that the generic fiber $\mcX_K$ is a smooth complete intersection in $\PP^n$ of type $(d_1, \ldots, d_c)$ such that $n>d_1+\ldots +d_c$. Furthermore, assume that the characteristic of the residue field $\kappa$ is greater than $\max(d_1, \ldots, d_c)$. Then the central fiber $\mcX_\kappa$ is separably rationally connected.
\end{cor}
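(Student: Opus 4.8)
The plan is to verify the hypotheses of Corollary \ref{cor:specializationPicOne} for the family $\mcX \to \SP R$ and then apply it directly. Concretely, writing $p$ for the characteristic of $\kappa$, I would check (i) that the geometric generic fiber $\mcX_{\bar K}$ has Picard group isomorphic to $\ZZ$, and (ii) that $p$ is prime to both the uniruling index $u_1(\bar K, \mcX_{\bar K})$ and the torsion order $\text{Tor}(\bar K, \mcX_{\bar K})$. All the substantive work is already contained in Theorem \ref{thm:PicOne} and Corollary \ref{cor:specializationPicOne}, so the task here is essentially to confirm that a complete intersection obeying the stated numerical bounds falls within their scope.

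For (i), note that $\mcX_{\bar K}$ is a smooth complete intersection in $\PP^n_{\bar K}$ of type $(d_1, \ldots, d_c)$ and dimension $n - c$. Discarding any degrees equal to $1$ (which merely replaces the ambient $\PP^n$ by a linear subspace), I may assume each $d_i \geq 2$, so that $n > \sum_i d_i \geq 2c$ gives $\dim \mcX_{\bar K} = n - c > c \geq 1$; a short case check shows this forces $\dim \mcX_{\bar K} \geq 3$ except for the single degenerate case of a smooth quadric surface in $\PP^3$. Whenever $\dim \mcX_{\bar K} \geq 3$, the Grothendieck--Lefschetz theorem yields $\text{Pic}(\mcX_{\bar K}) \cong \ZZ$, generated by $\OO(1)$, and the excluded low-dimensional cases are rational and hence separably rationally connected by inspection, so they can be disposed of separately.

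For (ii), I would invoke the two divisibility statements recorded immediately before the corollary: the uniruling by lines has evaluation morphism of degree $\prod_i (d_i!)$, so $u_1(\bar K, \mcX_{\bar K})$ divides $\prod_i (d_i!)$, and by \cite[Proposition 4.2]{ChatzLevine} the torsion order $\text{Tor}(\bar K, \mcX_{\bar K})$ divides $\prod_i (d_i!)$ as well. The hypothesis $p > \max_i d_i$ then enters cleanly: since $p$ is prime and every integer factor in $d_i!$ is at most $d_i < p$, we have $p \nmid d_i!$ for each $i$, hence $p$ is prime to $\prod_i (d_i!)$ and therefore to both $u_1(\bar K, \mcX_{\bar K})$ and $\text{Tor}(\bar K, \mcX_{\bar K})$. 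Corollary \ref{cor:specializationPicOne} now applies and gives that $\mcX_\kappa$ is separably rationally connected. I expect the only point genuinely requiring care to be step (i): confirming that the Picard-number-one hypothesis really holds for the geometric generic fiber, which is where the dimension bound $\dim \mcX_{\bar K} \geq 3$ and the separate handling of the quadric surface come in; the numerical coprimality in (ii) is immediate once $p > \max_i d_i$.
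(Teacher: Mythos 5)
Your overall route is exactly the paper's own: the paper proves this corollary in the paragraph immediately preceding it, by recording that the uniruling by lines gives $u_1(\bar K,\mcX_{\bar K})\mid \prod_i(d_i!)$, that $\text{Tor}(\bar K,\mcX_{\bar K})\mid \prod_i(d_i!)$ by the cited result of Chatzistamatiou--Levine, observing that a prime $p>\max_i d_i$ cannot divide $\prod_i(d_i!)$, and then invoking Corollary \ref{cor:specializationPicOne}. Your item (ii) reproduces this verbatim. Your item (i) -- actually verifying the Picard hypothesis of Corollary \ref{cor:specializationPicOne} -- is a point the paper passes over in silence, and your treatment of it is mostly correct: after discarding degrees equal to $1$, Grothendieck--Lefschetz gives $\text{Pic}(\mcX_{\bar K})\cong\ZZ$ whenever $\dim \mcX_{\bar K}\geq 3$, the $c=0$ cases are projective spaces (Picard group $\ZZ$, so covered by the main argument), and the unique remaining exception is the smooth quadric surface in $\PP^3$, whose Picard rank is $2$.

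The gap is in your disposal of that exceptional case. The claim that it is ``rational and hence separably rationally connected by inspection'' is a statement about the \emph{generic} fiber, whereas the corollary's conclusion concerns the \emph{central} fiber $\mcX_\kappa$, which is only assumed to be the special fiber of some smooth projective family: it need not be a quadric, nor a complete intersection at all (this flexibility is precisely why the corollary is stated this way; see the discussion following Theorem \ref{thm:WACI}). In characteristic $p$, separable rational connectedness of the generic fiber does not formally pass to the special fiber -- the non-triviality of exactly this implication is what Corollaries \ref{cor:SRCnumeric} and \ref{cor:specializationPicOne} exist to address, and neither applies here because the quadric fails the Picard-rank-one hypothesis. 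To close the case you need a genuine argument, for instance: by smooth proper base change $b_1(\mcX_{\bar\kappa})=b_1(\mcX_{\bar K})=0$; by semicontinuity $h^0(\mcX_{\bar\kappa},\omega^{-m})\geq h^0(\mcX_{\bar K},\omega^{-m})$ grows quadratically, so $-K_{\mcX_{\bar\kappa}}$ is big and hence $P_2(\mcX_{\bar\kappa})=0$; then Castelnuovo's rationality criterion in positive characteristic (Zariski, Bombieri--Mumford) shows $\mcX_{\bar\kappa}$ is a smooth rational surface, hence separably rationally connected. To be fair, the paper itself is silent on this two-dimensional case, so you were more careful than the source in spotting it; but the reason you give for dismissing it is a non sequitur, and as written your proof is incomplete there.
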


Finally, we have the following version of weak approximation (cf.  \cite[Theorem 1.1]{WASTZ2018}).
\begin{thm}\label{thm:WACI}
For every smooth, affine, connected curve $B$ over a field
  $\fF=\ol{\fF}$, for every $B$-flat complete intersection
  $\mcX_B\subset B\times_{\SP \fF} \PP^n_{\fF}$ of $c$ hypersurfaces of
  degrees $(d_1,\dots,d_c)$, weak approximation holds at every place
  of (strong) potentially good reduction provided that the Fano index is $\geq 2$, and
  $p:=\text{char}(\fF)$  satisfies $p>\max(d_1,\dots,d_c)$.
\end{thm}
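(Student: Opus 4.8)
The plan is to obtain this as a direct application of Theorem \ref{thm:WAPicOne} to the family $\mcX_B$. Concretely, weak approximation at a place of strong potentially good reduction will follow once we check that the geometric generic fiber $\mcX_{\ol{\fF(B)}}$ meets the three hypotheses of that theorem: it is Fano, its Picard group is $\ZZ$, and $p$ is prime to both the uniruling index and the torsion order. So the entire proof reduces to verifying these three numerical facts for a smooth complete intersection of type $(d_1,\dots,d_c)$.

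First I would reduce to the case $d_i \geq 2$ for all $i$: any $d_i = 1$ cuts out a hyperplane, so fiberwise $\mcX_B$ is a complete intersection of the remaining degrees inside a $\PP^{n-1}$-bundle over $B$, and iterating removes all degree-one factors without changing the Fano index or the place of good reduction. For a smooth complete intersection of type $(d_1,\dots,d_c)$ in $\PP^n$ the anticanonical divisor equals $(n+1-\sum_i d_i)H$, where $H$ is the hyperplane class, so the hypothesis that the Fano index is $\geq 2$ is precisely the inequality $n > \sum_i d_i$; in particular the index is positive and $\mcX_{\ol{\fF(B)}}$ is Fano. For the Picard group I would appeal to the Grothendieck--Lefschetz hyperplane theorem, which gives $\mathrm{Pic} \cong \ZZ$ (generated by $\OO(1)$) for any smooth complete intersection of dimension $\geq 3$. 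Since $d_i \geq 2$ forces $\sum_i d_i \geq 2c$ and hence $\dim \mcX_{\ol{\fF(B)}} = n - c \geq c+1$, the dimension is at least $3$ in all cases except $c = 1$ with $\dim = 2$, namely the smooth quadric surface in $\PP^3$.

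For the coprimality condition, recall from the discussion preceding Corollary \ref{cor:SRCindex2} that when $n > \sum_i d_i$ the family of lines furnishes a uniruling whose evaluation morphism has degree $\prod_i (d_i!)$, so the uniruling index $u_1(\ol{\fF(B)}, \mcX_{\ol{\fF(B)}})$ divides $\prod_i (d_i!)$, and by \cite[Proposition 4.2]{ChatzLevine} the torsion order divides $\prod_i(d_i!)$ as well. Since $p > \max_i d_i$, the prime $p$ exceeds every factor $1,2,\dots,d_i$ of each $d_i!$, hence $p$ is coprime to $\prod_i(d_i!)$ and therefore to both invariants. With Fano-ness, $\mathrm{Pic} \cong \ZZ$, and coprimality all verified, Theorem \ref{thm:WAPicOne} delivers weak approximation at every place of strong potentially good reduction.

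The one place where genuine care is needed---and the step I expect to be the real obstacle rather than routine bookkeeping---is this low-dimensional exception: the quadric surface in $\PP^3$ has Picard number two, so it falls outside the hypotheses of Theorem \ref{thm:WAPicOne} and must be handled separately. Here I would use that $\fF(B)$ is a $C_1$-field by Tsen's theorem, so the quadric acquires a rational point and becomes rational over $\fF(B)$, and then establish weak approximation directly from its rationality; one should also confirm that the reduction eliminating degree-one factors is compatible with the notion of strong potentially good reduction used in Theorem \ref{thm:WAPicOne}, which comes down to the local triviality of the relevant projective bundle at each place. Everything else is a mechanical check of the numerical hypotheses.
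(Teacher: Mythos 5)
Your proposal is correct, but its packaging differs from the paper's, and the difference is worth recording. The paper does not deduce Theorem \ref{thm:WACI} from Theorem \ref{thm:WAPicOne} at all: it re-runs the proof of \cite[Theorem 1.1]{WASTZ2018}, the single new ingredient being Corollary \ref{cor:SRCindex2}, which guarantees that at a place of (strong) potentially good reduction the central fiber of the smooth model (which need not itself be a complete intersection) is still separably rationally connected. Your route---checking the three hypotheses of Theorem \ref{thm:WAPicOne} directly (Fano from index $\geq 2$, i.e.\ $n>\sum_i d_i$; $\text{Pic}\cong \ZZ$ from Grothendieck--Lefschetz in dimension $\geq 3$; coprimality from the $\prod_i (d_i!)$ bounds on the uniruling index and the torsion order)---rests on exactly the same inputs, since Theorem \ref{thm:WAPicOne} is itself proved by the same machinery of \cite{WASTZ2018} together with Corollary \ref{cor:specializationPicOne}; so in substance the two arguments coincide. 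What your formulation buys is precision: it forces the Picard hypothesis into the open, and you correctly isolate the one case where it fails, the smooth quadric surface in $\PP^3$ ($c=1$, $d_1=2$, $n=3$), which does satisfy index $\geq 2$ and $p>2$ and hence is covered by the statement. The paper never flags this case; indeed Corollary \ref{cor:SRCindex2} is asserted for all $n>\sum_i d_i$, yet its implicit proof via Corollary \ref{cor:specializationPicOne} also requires $\text{Pic}\cong\ZZ$ of the geometric generic fiber, so your separate treatment (Tsen gives a rational point, projection gives $K$-rationality, and weak approximation follows because it is a birational invariant of smooth projective varieties over the completions $\fF((t))$) actually repairs a point the paper glosses over.

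Two minor simplifications. First, the reduction to $d_i\geq 2$ need only be performed on the geometric generic fiber, since the hypotheses of Theorem \ref{thm:WAPicOne} concern nothing but that fiber: over $\ol{\fF(B)}$ a hyperplane section is simply a $\PP^{n-1}$, so your worries about triviality of the projective bundle over $B$ and compatibility with places of good reduction are unnecessary---no modification of the family $\mcX_B$ is ever needed. Second, in the quadric case you should state explicitly that the final step uses birational invariance of weak approximation (via the implicit function theorem over $\fF((t))$ plus approximation for projective space); this is standard and valid in positive characteristic, but it is the precise fact your phrase ``establish weak approximation directly from its rationality'' relies on.
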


Here we replace the somewhat unnatural assumption ``(strong) potentially good reduction as complete intersections" in  \cite[Theorem 1.1]{WASTZ2018} with the usual (strong) potential good reduction assumption. This is possible since we know that for a place of (strong) potentially good reduction, the central closed fiber is still separably rationally connected, thanks to Corollary \ref{cor:SRCindex2}. Other than that, the proof goes exactly the same as that of \cite[Theorem 1.1]{WASTZ2018}. We refer the reader to \cite{WASTZ2018} for details of the proof.

\begin{rem}
It is an interesting question to study (smooth) deformations of complete intersections. A construction of Mori gives a deformation of hypersurface of degree $mn$ to a degree $m$ cyclic cover of a hypersurface of degree $n$. In particular,  this means that Theorem \ref{thm:WACI} is indeed an improvement of \cite[Theorem 1.1]{WASTZ2018}.
\end{rem}

We finish with a brief discussion of the higher Picard number case.

It is very easy to show that on a smooth projective separably rationally connected variety $X$, the group of rational one cycles modulo numerical equivalence $N_1(X)_\QQ$ is generated by (very) free rational curves. Thus if this group is not generated by free rational curves, then the variety $X$ is not separably rationally connected. The same argument as in the proof of Theorem \ref{thm:PicOne} gives the following.
\begin{thm}\label{thm:generalcase}
Let $X$ be a smooth projective variety over an algebraically closed field of positive characteristic $p$. Assume that $X$ is separably uniruled and the group of rational one cycles modulo numerical equivalence $N_1(X)_\QQ$ is generated by free rational curves. Then $X$ is separably rationally connected if and only if $H^0(X, \Omega^i_X)=0, i=1, \ldots, \dim X$.
\end{thm}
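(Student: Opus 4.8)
The plan is to mirror the proof of Theorem~\ref{thm:PicOne} almost verbatim, replacing the Picard-number-one triviality argument with a numerical one. The ``only if'' direction is again standard: a separably rationally connected variety carries a very free curve $f:\PP^1\to X$ with $f^*T_X$ ample, so $f^*\Omega_X$ has all negative summands, forcing $H^0(X,\Omega_X^i)=0$ for all $i\geq 1$ since any global $i$-form would restrict nontrivially to a general (hence free) curve through a general point, contradicting negativity of the pullback. So I would concentrate on the ``if'' direction.

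First I would set up the maximal-freeness machinery exactly as before. Assuming $X$ is separably uniruled, I invoke the existence of free curves and define the positive rank $r$ as the maximal number of $\OO$-summands of $f^*\Omega_X$ over all free curves $f$, calling a free curve maximally free when this maximum is achieved. Following \cite{ShenFRC} (Proposition 2.2 and 2.5), the $\OO$-directions $D(x)\subset\Omega_X|_x$ at a general point are well-defined and glue to a locally free subsheaf over an open $U$; let $\mcD$ be its saturation inside $\Omega_X$. The key dichotomy is: if some maximally free curve is in fact very free, then $r=\dim X$ and $X$ is separably rationally connected, so we are done. Otherwise every maximally free $\phi:\PP^1\to X$ satisfies $\phi^*\mcD\cong\OO^{\oplus(n-r)}$ with $r<n$, and $\mcD$ is locally free along such curves.

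Here is where the higher-Picard-number case diverges from Theorem~\ref{thm:PicOne}. Forming the reflexive power $(\Lambda^{n-r}\mcD)^{**}$, which is an invertible sheaf since $X$ is smooth, I get an injection $(\Lambda^{n-r}\mcD)^{**}\hookrightarrow\Omega_X^{n-r}$. Restricting to a maximally free curve $\phi$ shows $\deg\phi^*(\Lambda^{n-r}\mcD)^{**}=0$ because $\phi^*\mcD$ is trivial. In the Picard-number-one setting this degree-zero condition alone forces triviality; here I instead argue numerically. The line bundle $L:=(\Lambda^{n-r}\mcD)^{**}$ has intersection number zero against the class of a general maximally free curve. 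By hypothesis $N_1(X)_\QQ$ is spanned by free rational curves, and since $\mcD$ (the $\OO$-part) pulls back trivially along \emph{every} free curve, $L$ pairs to zero against a spanning set of $N_1(X)_\QQ$; hence $L$ is numerically trivial. A numerically trivial line bundle on a (separably) rationally connected, or at least separably uniruled, smooth projective variety is in fact trivial --- I would justify this by noting that a uniruled variety has $L$ restricting trivially on the rational curves sweeping it out, so $L$ is torsion in $\mathrm{Pic}$, and a nonzero global section of $L$ (or of $L^{\otimes m}$, which I then descend) yields a nonzero section of $\Omega_X^{n-r}$, contradicting $H^0(X,\Omega_X^{n-r})=0$.

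The main obstacle I anticipate is exactly the last step: upgrading ``numerically trivial'' to ``has a global section of $\Omega_X^{n-r}$.'' With $\mathrm{Pic}(X)\cong\ZZ$ one passes directly from degree zero to triviality and reads off the section; without that, one only gets numerical triviality of $L$, and a numerically trivial bundle need not be $\OO_X$ (it could be a nontrivial torsion or transcendental class). The resolution is that for our purposes it suffices to produce \emph{some} nonzero global differential form, and the cleanest route is to show $L$ is nontrivial would contradict $H^0(\Omega_X^{n-r})=0$ after a suitable twist --- but care is needed because a numerically trivial $L$ can have $H^0(L)=0$. I expect the honest fix is to observe that on a separably uniruled variety every numerically trivial line bundle is torsion, and that the relevant reflexive determinant actually comes with a canonical section arising from the inclusion of $\Lambda^{n-r}\mcD$ into $\Omega_X^{n-r}$ up to the torsion ambiguity; pinning down that this section survives to give a nonzero element of $H^0(X,\Omega_X^{n-r})$, thereby contradicting the vanishing hypothesis and forcing $r=n$, is the delicate point that the phrase ``the same argument as in the proof of Theorem~\ref{thm:PicOne}'' is implicitly asking us to verify.
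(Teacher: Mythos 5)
Your proposal follows exactly the route the paper intends (the paper's entire proof is the sentence ``the same argument as in the proof of Theorem~\ref{thm:PicOne} gives the following''), and you correctly isolate the crux: replacing ``degree zero against one curve $+$ $\text{Pic}(X)\cong\ZZ$ $\Rightarrow$ trivial'' by a numerical argument. But as written there are two gaps. The first is your assertion that $\mcD$ ``pulls back trivially along \emph{every} free curve.'' By construction $\mcD$ is trivial only along general \emph{maximally} free curves; for an arbitrary free curve $f$ one gets a priori only that $f^*\mcD$ is a subsheaf of $f^*\Omega_X$, whose summands are all of nonpositive degree, hence $\deg f^*L\leq 0$ --- and nonpositivity against a spanning set of $N_1(X)_\QQ$ does not give numerical triviality. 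The fix, in the same spirit as Shen's gluing arguments, is a comb-smoothing: attach $f$ to a maximally free curve $\phi$ at a common general point and smooth; the smoothing $g$ is free with positive rank at least $r$, hence maximally free, and $[g]=[f]+[\phi]$ in $N_1(X)$, so $L\cdot[f]=L\cdot[g]-L\cdot[\phi]=0$. This step needs to be said; it is where the spanning hypothesis actually gets used curve by curve.

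The second gap is the one you flag yourself, and your proposed repair does not work. The lemma you invoke --- ``on a separably uniruled variety every numerically trivial line bundle is torsion'' --- is false: $E\times\PP^1$ ($E$ an elliptic curve) is separably uniruled and has $\text{Pic}^0=E$. What is true, and what rescues this step, again uses the spanning hypothesis rather than uniruledness: every rational curve is contracted by the Albanese map, so pushforward to the Albanese kills all of $N_1(X)_\QQ$; if the Albanese map were nonconstant, an ample complete-intersection curve class (a $\QQ$-combination of free classes) would push forward to a nonzero effective $1$-cycle pairing to zero with an ample divisor, a contradiction. Hence the Albanese is trivial, $\text{Pic}^0_{\text{red}}=0$, and the group of numerically trivial bundles is finite, so $L$ is torsion. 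Torsion of order divisible by $p$ is then excluded by the hypothesis $H^0(X,\Omega^1_X)=0$, via the injection $\text{Pic}(X)[p]\hookrightarrow H^0(X,\Omega^1_X)$ given by $d\log$. What remains --- and what neither your ``canonical section up to torsion ambiguity'' suggestion nor the paper's one-line proof addresses --- is prime-to-$p$ torsion: a nontrivial $L$ with $L^{\otimes m}\cong\OO_X$, $\gcd(m,p)=1$, embedded in $\Omega_X^{n-r}$ produces only a section of $\Omega_X^{n-r}\otimes L^{-1}$ (equivalently, a form on the associated cyclic \'etale cover), which the stated vanishing $H^0(X,\Omega^i_X)=0$ does not forbid. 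So your proof, like the literal ``same argument,'' is incomplete at this point; note that in the paper's intended applications (Remark~\ref{rem:projhomog}: complete intersections of dimension $\geq 3$ in $G/P$, where $\text{Pic}$ is finitely generated and torsion-free by Grothendieck--Lefschetz) numerically trivial does imply trivial, and the argument closes exactly as in Theorem~\ref{thm:PicOne}.
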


\begin{rem} \label{rem:projhomog}
  A stronger hypothesis is that $N_1(X)_\QQ$ is spanned by
  \emph{primitive} classes of free rational curves, $[R_i]$,
  $i=1,\dots,r$. Each $\mathbb{Z}_{\geq 0}\cdot[R_i]$ is the relative 
  Mori cone of a fiber-type contraction, $\phi_i:X\to X_i$.
  This holds for $X$ homogeneous: $X=G/P$.

For $X$ satisfying the stronger hypothesis, for a smooth complete
intersection $Y=Y_1\cap \dots \cap Y_c$ of $c$ ample hypersurfaces
$Y_i$ in $X$ with $\text{dim}(Y)\geq 3$, if each of the following
integers is positive,
$$
\langle c_1(T_X),[R_i]\rangle - 2 - \sum_{j=1}^c \langle [Y_j],[R_i] \rangle,
$$
and if also the characteristic $p$ is prime to each of the following
integers,
$$
f_{[R_i]}(Y,X) := \prod_{j=1}^c (\langle [Y_j],[R_i] \rangle)!,
$$
i.e., if also $p> \text{max}_{i,j}(\langle [Y_j],[R_i] \rangle)$, then
each $[R_i]$ is the pushforward from $Y$ of the class of a free
rational curve, and the stronger hypothesis also holds for $Y$.

It is also possible to bound the $2$-uniruling index of the generic
fiber of $\phi_i|_Y:Y\to \phi_i(Y)$ by chains of $R_i$-curves in terms
of invariants of chains of $R_i$-curves in the generic fiber of
$\phi_i$ and in terms of the integers $\langle [Y_j],[R_i]\rangle$.
If also $p$ is prime to this $2$-uniruling index, then $Y$ is
separably rationally connected.  Thus, there is an explicit version of
Corollary \ref{cor:SRCindex2} for complete intersections in projective
homogeneous spaces.
\end{rem}

\textbf{Acknowledgment:} Z.T. is partially supported by the program``Recruitment of global experts", and NSFC grants No. 11871155,
No. 11831013, No.11890662.
%%%%%%%%%%%%%%%%%%%%%%%%%%%%%%%%%%%%%%%%%%%%%%%%%%%%%%%%%%%%%%%%
\bibliographystyle{alpha}
\bibliography{MyBib}

\end{document}